\titleformat*{\section}{\large\bfseries}
\theoremstyle{definition}
\newtheorem{thm}{Theorem}[section]
\newtheorem{rem}[thm]{Remark}
\newtheorem{lem}[thm]{Lemma}
\newtheorem{prop}[thm]{Proposition}
\newtheorem{cor}[thm]{Corollary}
\newtheorem{example}[thm]{Example}
\renewcommand{\d}{{\rm d}}
\newcommand{\norm}[1]{\left\Vert #1 \right\Vert}
\newcommand{\N}{\mathbb{N}}
\newcommand{\E}{\mathbb{E}}
\newcommand{\R}{\mathbb{R}}
\DeclareMathOperator{\Var}{\text{Var}}
\DeclareMathOperator{\eps}{\varepsilon}
\DeclareMathOperator{\med}{\text{med}}
\title{Consistency of randomized\\ integration methods}
\author{Julian Hofstadler, Daniel Rudolf}
\date{\today}
\begin{document}

\maketitle

\begin{abstract}
We prove that a class of randomized integration methods, 
including averages based on $(t,d)$-sequences, Latin hypercube sampling, Frolov points as well as Cranley-Patterson rotations, consistently estimates expectations of integrable functions.
Consistency here refers to convergence in mean and/or convergence in probability of the estimator to the integral of interest.    
Moreover, we suggest median modified methods and show for integrands in $L^p$ with $p>1$ consistency in terms of almost sure convergence.
\end{abstract}

\textbf{Keywords:} median of means, Monte Carlo method, consistency, randomized integration methods.\\[-2.5ex]

\textbf{Classification:} 65C05; 65D30.

	\section{Introduction}
In computational statistics and numerical analysis one of the major challenges is the development and investigation of estimators of expectations. A prototypical setting is the approximation of the integral
\begin{equation} 
	\label{eq: integral}
	I(f)=\int_{[0,1]^d} f(x)\, \d x 
\end{equation}
w.r.t. the Lebesgue measure for an integrable function $f\colon [0,1]^d\to \mathbb{R}$ with $d\in\mathbb{N}$. 
Given a (sufficiently rich) probability space $(\Omega,\mathcal{F},\mathbb{P})$, on which all random variables are defined, as well as
a measurable space $(G,\mathcal{G})$, a measurable function $g\colon G\to \mathbb{R}$ and a $G$-valued random variable $Y$ we can ask more generally for the computation of 
\begin{equation} \label{eq: expect}
	I(g) = \mathbb{E}[g(Y)].
\end{equation} 
For this goal (structured) Monte Carlo methods have been constructed.
We call $(S_n)_{n \in \N}$ a Monte Carlo method if for any $n \in \N$ and integrable $g$ we have that $S_ng$ is a real-valued random variable of the form $S_n g = \Phi_n(g(X_1), \dots, g(X_{N_n}))$, where $\Phi_n$ is a suitable (random) function, each $X_i$ is a $G$-valued random variable and $N_n$ is a $\N$-valued random variable that determines the number of allowed function evaluations for $g$.
Classical examples of such methods (exploiting different properties of $g$) for the approximation of $I(g)$ are for instance antithetic sampling, importance sampling or control variate based estimators, see e.g. \cite{muller2012monte,owen_2013_mcbook}.

We focus on the prototypical setting of the approximation of $I(f)$ from \eqref{eq: integral}, where $G=[0,1]^d$, and  consider structured methods in that context.
A minimal requirement for a reasonable proxy is that for sufficiently large and increasing $n$ it gets `close' to $I(f)$ whenever $f$ is integrable. We ask for the sequence of random
variables $(S_nf)_{n\in\mathbb{N}}$ that it at least, either, converges in probability, i.e., for
any $\varepsilon>0$ we have
\begin{equation}
	\label{eq: WLLN}
	\lim_{n \to \infty}\mathbb{P}\left[\vert S_nf-I(f) \vert >\eps\right]=0,
\end{equation}
or 
converges in (absolute) mean, i.e., 
\begin{equation}\label{equ:convergence_in_mean}
	\lim_{n\to\infty}\E \left[ \vert S_n f - I(f) \vert  \right] = 0.
\end{equation}
Note that also other formulations of convergence are feasible. For example, convergence in mean squared sense, that is, $\lim_{n\to \infty} \E [\vert S_n f - I(f)\vert^2  ]=0$. However, for $\E [\vert S_n f - I(f)\vert^2  ]$ to be finite one requires $f$ to be square integrable instead of just integrable, such that we do not further consider this concept. Another desirable asymptotic behavior, at least a more demanding property than \eqref{eq: WLLN}, is given by almost sure convergence, i.e.,  
\begin{equation}
	\label{eq: SLLN}
	\mathbb{P}\left[\lim_{n \to \infty} S_nf = I(f)\right]=1.
\end{equation}
For the classical Monte Carlo estimator $\frac{1}{n}\sum_{i=1}^n f(X_i)$, based on an iid sequence $(X_i)_{i \in \N}$ of random variables, uniformly distributed in $[0,1]^d$,
\eqref{eq: WLLN} is known as weak law of large numbers (WLLN) and \eqref{eq: SLLN} as strong law of large numbers (SLLN). Thus, we have convergence in probability and almost sure convergence for integrable $f$ of the standard estimator.
Motivated by this fact, we also sometimes say a (general) Monte Carlo method $(S_n)_{n \in \N}$ satisfies such a law of large numbers if the corresponding limit property (\eqref{eq: WLLN} or \eqref{eq: SLLN}) holds. 

Depending on integrability properties of the integrand $f$ we provide conditions for Monte Carlo methods $(S_n)_{n \in \N}$ to satisfy different types of consistency, namely convergence in probability and in mean as well as almost sure convergence. In fact, our results apply to more general methods which are introduced in Section~\ref{sec:wlln}.
For $p\geq 1$ let $L^p[0,1]^d$ be the set of all measurable functions $f \colon [0,1]^d \to \R$ with finite $\Vert f \Vert_p^p = \int_{[0,1]^d} \vert f(x)\vert^p \d x $.
We consider methods $(S_n)_{n \in \N}$
such that for any $n\in\mathbb{N}$, any $f, g \in L^1[0,1]^d$ and $\alpha , \beta \in \R$
we have 
\begin{itemize}
	\item[1)] linearity, i.e., $S_n (\alpha f + \beta g) = \alpha S_n f + \beta S_n g$ almost surely, 
	\item[2)] monotonicity, i.e., $\vert S_n f \vert \leq S_n \vert f \vert$ almost surely, and
	\item[3)] unbiasedness, i.e., $\E[S_n f] = I(f)$.
\end{itemize} 
We call a method $(S_n)_{n \in \N}$ linear, monotone or unbiased, respectively, if the corresponding property holds for any $n \in \N$.

Generically in Theorem~\ref{thm:convergence_L_1} we show that for this class of estimators  convergence in mean and in probability is equivalent for $f \in L^1[0,1]^d$.
Moreover, we consider estimators of \eqref{eq: integral} based on randomized $(t,d)$-sequences \cite{owen1995randomly}, Cranley-Patterson rotations \cite{cranley76randomization}, Latin hypercube sampling \cite{McKay1979} and randomized Frolov point sets \cite{krieg2017universal} 
and show that all of these methods are consistent regarding convergence in probability and mean whenever $f\in L^1[0,1]^d$.
After that, we argue for $f\in L^p[0,1]^d$ with $p>1$ how to get consistency in terms of almost sure convergence for median modified methods. In the latter scenario we follow the approach of \cite{owen2021strong}.

Now let us discuss how these results fit into the literature and whether they can be improved.
Given a Monte Carlo method $(S_n)_{n \in \N}$ and $f \in L^p[0,1]^d$ with $p\geq 1$ we define the expected absolute error as $\E[\vert S_n f -I(f) \vert ]$ and the
probability of failure $\varepsilon>0$ as
$\mathbb{P}[\vert S_n f - I(f) \vert > \eps]$.
Then, by virtue of
\cite[Theorem 2.3]{kunsch2019optimal} there are constants $\eps_0 >0$, $c>0$ and $n_0 \in \N$ with the following properties:
For any $n\geq n_0$, $\eps \in (0,\eps_0)$ and any arbitrary Monte Carlo method $(S_m)_{m \in \N}$ with $\E N_m \leq m$ we have
\begin{equation}\label{equ:lower_bound_prob_error}
	\sup_{\Vert f \Vert_1 \leq 1}\mathbb{P}\left[\vert S_nf - I(f) \vert >\eps\right] \geq c.
\end{equation}
As a consequence, see also \cite{He94,novak1988deterministic}, for the expected absolute error and $\eps = \eps_0/2$ we obtain 
\begin{equation}\label{equ:lower_bound_expt_error}
	\sup_{\Vert f \Vert_1 \leq 1}
	\E\left[ \vert S_nf - I(f) \vert \right] \geq \frac{c\cdot \eps_0}{2}.
\end{equation} 
By \eqref{equ:lower_bound_prob_error} and \eqref{equ:lower_bound_expt_error}, convergence in probability and convergence in mean cannot be improved to convergence uniformly 
for all $f \in L^1[0,1]^d$ with $\Vert f\Vert_1\leq 1$.
However, it is well known that the classical Monte Carlo estimator satisfies the SLLN for $f\in L^1[0,1]^d$ and that this implies the WLLN. 
Therefore, for fixed $f\in L^1[0,1]^d$ and a given Monte Carlo method (or a randomized integration method as defined in Section \ref{sec:wlln}) it is reasonable to ask for consistency regarding the convergence concepts mentioned above, even when the uniform probability of failure and the uniform expected absolute error do not decrease.

Motivated by Bayesian optimization \cite{balandat2020botorch} the recent work \cite{owen2021strong} provides a SLLN for $(t,d)$-sequences
randomized by a nested uniform scrambling for $f\in L^p[0,1]^d$ with $p>1$. The arguments there are based on the interpolation theorem of Riesz-Thorin as well as a subsequence technique. 
We consider a median of general estimators and show a SLLN without using the subsequence technique. However, we also require for the integrand $f\in L^p[0,1]^d$ with $p>1$. 
In contrast to that, our results about convergence in probability and in mean, see Theorem~\ref{thm:convergence_L_1}, Corollary~\ref{thm:Consistency_L_1_via_L_p} and Corollary~\ref{coro:consistency_by_variance}, require only $f\in L^1[0,1]^d$.
We apply these to different settings including randomized digital nets and Latin hypercube sampling, therefore extending the WLLNs of \cite{loh1996latin,owen2021strong}.
Moreover, the consequence of the convergence in mean of those estimators on $L^1[0,1]^d$ is to the best of our knowledge new.

Now we briefly outline the structure of the paper. {Section~\ref{sec:wlln} contains our results w.r.t. convergence in mean and in probability. 
	In particular, Theorem~\ref{thm:convergence_L_1}, Corollary~\ref{thm:Consistency_L_1_via_L_p} and Corollary~\ref{coro:consistency_by_variance} establish both types of convergence for $f\in L^1[0,1]^d$.}
We discuss estimators based on samples from randomized $(t,d)$-sequences, Latin hypercube sampling, Cranley-Patterson rotations and the randomized Frolov algorithm for which our results are applicable.
In Section \ref{sec:prob_amplification_and_SLLN} we describe how the median of independent realizations improves the 
probability of failure 
and how it can be used to derive a SLLN for $f\in L^p[0,1]^d$ with $p>1$.

\section{Convergence in mean and in probability}\label{sec:wlln}
The following theorem and its consequences provide our main tools for establishing convergence in probability and in mean for the aforementioned Monte Carlo methods.
However, our results hold for a larger class of estimators. Namely, we call a sequence of operators $(S_n)_{n \in \N}$  randomized integration method if $S_n \colon L^1[0,1]^d \to \mathcal{R}(\Omega)$ for any $n \in \N$, where $\mathcal{R}(\Omega)$ denotes the set of all real-valued random variables on $(\Omega, \mathcal{F}, \mathbb{P})$.
Clearly, any Monte Carlo method is a randomized integration method and,
moreover, the definitions of linearity, monotonicity and unbiasedness can be directly extended.

\begin{thm}\label{thm:convergence_L_1}
	Let $(S_n)_{n \in \N}$ be a randomized integration method that is linear, monotone and unbiased. Let
	$\mathcal{D} \subset L^1[0,1]^d$ be dense w.r.t. $\Vert \cdot \Vert_1$.
	Then the following statements are equivalent:
	\begin{enumerate}[(i)]
		\item \label{en: in_mean_L1}
		For any $f \in L^1[0,1]^d$ we have $\lim_{n\to \infty} \E[\vert S_nf - I(f)\vert ] = 0$.
		\item \label{en: in_mean_D}
		For any $\phi \in \mathcal{D}$ we have $\lim_{n\to \infty} \E[\vert S_n \phi - I(\phi)\vert ] = 0$. 
		\item \label{en: in_prob_D}
		For any $\phi \in \mathcal{D}$ we have that $(S_n\phi)_{n\in\mathbb{N}}$ converges in probability to $I(\phi)$.		
		\item \label{en: in_prob_L1}
		For any $f \in L^1[0,1]^d$ we have that $(S_nf)_{n\in\mathbb{N}}$ converges in probability to $I(f)$.
	\end{enumerate}
\end{thm}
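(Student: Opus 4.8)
The plan is to establish the cycle of implications $\eqref{en: in_mean_L1} \Rightarrow \eqref{en: in_mean_D} \Rightarrow \eqref{en: in_prob_D} \Rightarrow \eqref{en: in_prob_L1} \Rightarrow \eqref{en: in_mean_L1}$, so that the heart of the matter is a density/approximation argument upgrading convergence statements from the dense set $\mathcal{D}$ to all of $L^1[0,1]^d$, in both the mean and the probability formulations. The two trivial arrows are $\eqref{en: in_mean_L1} \Rightarrow \eqref{en: in_mean_D}$ (restrict the hypothesis to $\phi \in \mathcal{D} \subset L^1[0,1]^d$) and $\eqref{en: in_mean_D} \Rightarrow \eqref{en: in_prob_D}$ (convergence in mean implies convergence in probability by Markov's inequality, applied to each fixed $\phi$). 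The arrow $\eqref{en: in_prob_L1} \Rightarrow \eqref{en: in_mean_L1}$ is again a restriction-plus-Markov step in disguise, but with a twist: since $\eqref{en: in_prob_L1}$ already gives convergence in probability for \emph{all} $f \in L^1$, I would actually prove $\eqref{en: in_prob_D} \Rightarrow \eqref{en: in_mean_L1}$ directly and then derive $\eqref{en: in_prob_L1}$ as a formal consequence of $\eqref{en: in_mean_L1}$ via Markov — effectively the nontrivial content is a single implication $\eqref{en: in_prob_D} \Rightarrow \eqref{en: in_mean_L1}$, with $\eqref{en: in_mean_L1} \Rightarrow \eqref{en: in_prob_L1}$ closing the loop cheaply.

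The core estimate exploits linearity, monotonicity, and unbiasedness together. Fix $f \in L^1[0,1]^d$ and $\varepsilon > 0$, pick $\phi \in \mathcal{D}$ with $\Vert f - \phi \Vert_1 < \varepsilon$, and write $S_n f - I(f) = (S_n \phi - I(\phi)) + (S_n(f-\phi) - I(f-\phi))$ using linearity and linearity of $I$. By the triangle inequality in $L^1(\Omega)$,
\begin{equation*}
	\E[\vert S_n f - I(f)\vert] \le \E[\vert S_n \phi - I(\phi)\vert] + \E[\vert S_n(f-\phi)\vert] + \vert I(f-\phi)\vert.
\end{equation*}
For the last term, $\vert I(f-\phi)\vert \le \Vert f - \phi\Vert_1 < \varepsilon$. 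For the middle term, monotonicity gives $\vert S_n(f-\phi)\vert \le S_n\vert f - \phi\vert$ almost surely, hence $\E[\vert S_n(f-\phi)\vert] \le \E[S_n\vert f-\phi\vert] = I(\vert f - \phi\vert) = \Vert f - \phi\Vert_1 < \varepsilon$ by unbiasedness. This is the crucial point: monotonicity converts the randomized error of the tail $f - \phi$ into something controlled \emph{deterministically} by the $L^1$-distance, uniformly in $n$. Thus $\limsup_n \E[\vert S_n f - I(f)\vert] \le \limsup_n \E[\vert S_n\phi - I(\phi)\vert] + 2\varepsilon$. It therefore suffices to know that $\E[\vert S_n\phi - I(\phi)\vert] \to 0$ for the single $\phi \in \mathcal{D}$ — but the hypothesis $\eqref{en: in_prob_D}$ only gives convergence \emph{in probability}, not in mean, so one more step is needed to handle the $\phi$-term.

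The remaining obstacle — and the place where the argument is genuinely delicate — is upgrading, for a fixed $\phi \in \mathcal{D}$, convergence in probability of $S_n\phi$ to convergence in mean of $S_n\phi$; ordinarily this requires uniform integrability, which is not assumed. The fix is to reduce once more to the dense set: given $\delta > 0$ choose $\psi \in \mathcal{D}$ — or more simply a \emph{bounded} function, using that bounded functions are dense and $\mathcal{D}$ can be intersected with them, or that one may re-run the splitting with a bounded approximant — with $\Vert \phi - \psi\Vert_1 < \delta$ and $\psi$ bounded, say $\Vert\psi\Vert_\infty \le M$. Then $\vert S_n\psi - I(\psi)\vert \le S_n\vert\psi\vert + \Vert\psi\Vert_\infty$ is not yet bounded, so instead I would use the elementary bound $\E[\vert S_n\psi - I(\psi)\vert] \le \E[\vert S_n\psi - I(\psi)\vert \mathbf{1}_{\{\vert S_n\psi - I(\psi)\vert \le K\}}] + \E[\vert S_n\psi - I(\psi)\vert \mathbf{1}_{\{\vert S_n\psi - I(\psi)\vert > K\}}]$ and control the second term by noting $\vert S_n\psi\vert \le S_n\vert\psi\vert$ with $\E[S_n\vert\psi\vert] = \Vert\psi\Vert_1$ finite and the event having probability tending to $0$ — but this still needs uniform integrability of $(S_n\psi)_n$. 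The clean route, which I expect the authors take, is to observe that $S_n\vert\psi\vert \ge \vert S_n\psi\vert \ge 0$ with $\E[S_n\vert\psi\vert] = \Vert\psi\Vert_1$ constant in $n$, so $(S_n\psi)_n$ \emph{is} uniformly integrable (dominated in $L^1$-norm by a sequence with constant, hence convergent, expectations and nonnegative — combined with convergence in probability of $S_n\psi - I(\psi) \to 0$ this forces convergence in mean via, e.g., the Vitali convergence theorem, since $\sup_n \E[\vert S_n\psi\vert] < \infty$ plus tightness from convergence in probability yields UI). Hence for $\phi \in \mathcal{D}$, convergence in probability (hypothesis $\eqref{en: in_prob_D}$) plus the uniform $L^1$-bound $\E[S_n|\phi|] = \|\phi\|_1$ coming from monotonicity and unbiasedness together give convergence in mean of $S_n\phi$, closing the chain. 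Assembling: $\eqref{en: in_prob_D}$ gives convergence in mean on $\mathcal{D}$, the splitting estimate extends it to $\eqref{en: in_mean_L1}$, Markov gives $\eqref{en: in_prob_L1}$, and the two easy arrows complete the equivalence.
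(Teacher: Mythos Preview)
Your density/splitting estimate is exactly right and matches the paper: monotonicity plus unbiasedness give $\E[\vert S_n(f-\phi)\vert]\le \E[S_n\vert f-\phi\vert]=\Vert f-\phi\Vert_1$, so the tail is controlled uniformly in $n$. The trivial arrows \eqref{en: in_mean_L1}$\Rightarrow$\eqref{en: in_mean_D}$\Rightarrow$\eqref{en: in_prob_D} are also fine.

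The gap is in your upgrade from convergence in probability to convergence in mean for a fixed $\phi\in\mathcal{D}$. The claim ``$\sup_n \E[\vert S_n\phi\vert]<\infty$ plus convergence in probability yields uniform integrability'' is false: take $Y_n=n\mathds{1}_{A_n}$ with $\mathbb{P}(A_n)=1/n$; then $\E\vert Y_n\vert=1$ for all $n$ and $Y_n\to 0$ in probability, yet $\E\vert Y_n\vert\not\to 0$. Domination $\vert S_n\phi\vert\le S_n\vert\phi\vert$ by a nonnegative sequence with constant expectation does not help either, unless you already know the dominating sequence $S_n\vert\phi\vert$ is itself uniformly integrable---and to show that you would need convergence in probability of $S_n\vert\phi\vert$, which is not supplied by \eqref{en: in_prob_D} since $\vert\phi\vert$ need not lie in $\mathcal{D}$. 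Your attempt to pass to a bounded approximant $\psi$ also stalls, because for a general randomized integration method $S_n\mathds{1}$ need not be bounded (e.g.\ the randomized Frolov example in the paper), so $\vert S_n\psi\vert\le \Vert\psi\Vert_\infty S_n\mathds{1}$ gives no pointwise bound.

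The paper's fix is to keep \eqref{en: in_prob_D}$\Rightarrow$\eqref{en: in_prob_L1} and \eqref{en: in_prob_L1}$\Rightarrow$\eqref{en: in_mean_L1} as two separate steps. First extend convergence in probability from $\mathcal{D}$ to all of $L^1$ by exactly your splitting estimate, but applied to probabilities rather than expectations. Then, with \eqref{en: in_prob_L1} in hand, decompose an arbitrary $f=f^+-f^-$. For nonnegative $g\in L^1$ monotonicity forces $S_ng=\vert S_ng\vert$ a.s., hence $\E[\vert S_ng\vert]=\E[S_ng]=I(g)=\E[\vert I(g)\vert]$ \emph{exactly}, for every $n$. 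This is strictly stronger than a uniform $L^1$ bound: combined with convergence in probability of $S_ng\to I(g)$ (now available for \emph{all} $g\in L^1$, in particular $g=f^\pm$), it is precisely the hypothesis of the standard characterization (Kallenberg, Theorem~5.12) that $Y_n\to Y$ in probability and $\E\vert Y_n\vert\to\E\vert Y\vert$ together imply $\E\vert Y_n-Y\vert\to 0$. Linearity then handles signed $f$. The point you were missing is that the nonnegative reduction turns the hopeless ``bounded first moments'' into the decisive ``matching first moments''.
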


\begin{proof}
	First, observe that \eqref{en: in_mean_L1} implies \eqref{en: in_mean_D} and applying Markov's inequality yields that \eqref{en: in_mean_D} implies \eqref{en: in_prob_D}. 
	
	We argue now that \eqref{en: in_prob_L1} is a consequence of \eqref{en: in_prob_D}. For this let $f \in L^1[0,1]^d$ and $\eps>0$ be arbitrary. 	
	Moreover, let $\delta \in (0,1) $ be arbitrary and choose $\phi \in \mathcal{D}$ such that $\norm{f-\phi}_{1} \leq \frac{\eps\delta}{6} < \frac{\eps}{3}$.
	We observe that 
	\begin{align*}
		\mathbb{P}\left[ \left\vert S_nf - I(f) \right\vert > \eps \right]
		\leq 
		\mathbb{P}\left[ \left\vert S_nf - S_n\phi \right\vert > \frac{\eps}{3} \right]
		+
		\mathbb{P}\left[ \left\vert S_n\phi - I(\phi) \right\vert > \frac{\eps}{3} \right] \\
		+
		\mathbb{P}\left[ \left\vert I(\phi) - I(f) \right\vert > \frac{\eps}{3} \right].
	\end{align*} 
	Combining Markov's inequality with linearity, monotonicity and unbiasedness of $S_n$ we deduce 
	\[
	\mathbb{P}\left[\vert S_nf - S_n \phi \vert > \frac{\eps}{3}\right] \leq 
	\frac{3}{\eps}\E\left[ \vert S_n (f-\phi) \vert \right]
	\leq 
	\frac{3}{\eps}\E\left[ S_n\vert f-\phi \vert \right]
	= \frac{3}{\eps}\norm{f-\phi}_1
	\leq \frac{\delta}{2}.
	\]
	The choice of $\phi$ implies that $\mathbb{P}\left[\vert I(f) - I(\phi) \vert > \eps/3\right] =0$, since $\vert I(f) - I(\phi) \vert \leq \norm{f-\phi}_1 < \eps/3$.
	By \eqref{en: in_prob_D} we have convergence in probability for $\phi \in \mathcal{D}$, hence there is some $n_0\in \N$ such that for any $n \geq n_0$ holds $\mathbb{P}\left[\vert S_n\phi - I(\phi) \vert > \eps/3\right] \leq \delta/2$.
	In conclusion 
	\[
	\mathbb{P}\left[ \left\vert S_n f -I(f) \right\vert > \eps \right] 
	\leq 
	\delta,
	\]
	for any $n \geq n_0$. By the fact that $\delta \in (0,1)$ was arbitrary we have that \eqref{en: in_prob_L1} follows. 
	
	It is left to show that \eqref{en: in_mean_L1} is a consequence of \eqref{en: in_prob_L1}.
	For this we first consider an arbitrary non-negative $f \in L^1[0,1]^d$.
	We aim to apply a well known characterization of convergence in mean w.r.t. convergence in probability that is (for the convenience of the reader) formulated in Lemma~\ref{lemma:WLLN_mean_convergence_equivalence}. 
	Monotonicity of $S_n$ implies that $S_n f \leq \vert S_n f \vert \leq S_n \vert f \vert  = S_n f$ almost surely. 
	Hence $S_n f = \vert S_n f \vert$ almost surely and consequently
	\[
	\E\left[ \vert S_n f\vert  \right] = \E\left[ S_n f \right] = I(f).
	\]
	This shows that $\left(\E\left[ \vert S_nf \vert \right]\right)_{n \in \N}$ is a constant sequence only taking the value $I(f) = \E[I(f)] = \E[\vert I(f)\vert ]$, which is clearly also its limit. 	
	By assumption we know that $(S_nf)_{n\in\mathbb{N}}$ converges in probability to $I(f)$ and therefore we can apply Lemma~\ref{lemma:WLLN_mean_convergence_equivalence} with $Y_n = S_nf $ and $Y=I(f)$. We obtain  
	\[
	\lim_{n \to \infty}
	\E[\vert S_n f -I(f) \vert ] =0.
	\]
	
	For the general case, i.e., when $f$ is not necessarily non-negative, we split $f$ into its positive and negative part, $f = f^+ - f^-$, with non-negative (integrable) functions $f^+, f^-$. 
	Linearity of $S_n$ and the integral together with the triangle inequality imply
	\[
	\E\left[ \vert S_n f -I(f) \vert \right] 
	\leq \E\left[ \vert S_n f^+ - I(f^+) \vert  \right]
	+ 
	\E\left[  \vert S_n f^- - I(f^-) \vert  \right].
	\]
	Since both, $f^+$ and $f^-$, are non-negative we have that
	\[
	\lim_{n \to \infty}\E[\vert S_n f^+ -I(f^+) \vert ] = 0
	\qquad\mbox{and}\qquad
	\lim_{n \to \infty}	\E[\vert S_n f^- -I(f^-) \vert ] = 0.
	\]
	Those limits give \eqref{en: in_mean_L1} and therefore the proof is finished. 
\end{proof}

\begin{rem}
	Theorem~\ref{thm:convergence_L_1} can be considered as generalization of a classical result appearing in the context of deterministic quadrature rules, cf. \cite[Theorem 3.1.2]{brass2011quadrature}.
	There, one has an uniform operator norm requirement of the quadrature rule which in our setting is somehow hidden in the assumptions about $S_n$.
	In particular, by monotonicity and unbiasedness of $S_n$ we have 
	\[
	\norm{S_n f}_{L^1(\Omega)} = \E \left[\vert S_n f \vert  \right]
	\leq 
	\mathbb{E}[S_n \vert f \vert ] = \norm{f}_{1}.
	\]
	Considering $f\equiv 1$, i.e., the function which only takes the value $1$, we obtain $\norm{S_n}_{L^1[0,1]^d \to L^1(\Omega)} = \sup_{\Vert f \Vert_1 \leq 1}\norm{S_nf}_{L^1(\Omega)} =1$ independent of $n \in \N$.
\end{rem}

\begin{rem}
	Linearity, monotonicity and unbiasedness are not sufficient to obtain consistency.   
	For instance, let $m \in \N$, let $Y_1, \dots, Y_m$ be independent and uniformly distributed in $[0,1]^d$ and set $X_j = Y_k$ if $j \; \mbox{mod} \;m =k$.  
	Then, consider 
	\[
	S_n f = \frac{1}{n}\sum_{j=1}^n f(X_j),
	\]
	where $f \in L^1[0,1]^d$. 
	It is clearly linear, monotone and unbiased, but it is not consistent.
	Indeed, if $A = \left[0, \frac{1}{2}\right] \times [0,1] \times \dots \times [0,1]$, $\eps \in \left(0, \frac{1}{2}\right)$ and $g \equiv \mathds{1}_A$,
	then for any $n \in \N$ satisfying $n \; \mbox{mod}\; m =0$ follows that
	$S_n g = \frac{1}{m}\sum_{j=1}^{m}g(Y_j)$ and consequently
	\[
	\mathbb{P}\left[ \vert S_n g - I(g) \vert > \eps \right] \geq 
	\mathbb{P}\left[ S_n g =1 \right] = \frac{1}{2^m},
	\]
	such that we have no consistency.
\end{rem}

The formerly stated theorem can be used to verify convergence in probability and in mean by exploiting 
the equivalences regarding the dense subset $\mathcal{D}$. 
To emphasize that, we add the following simple tool. 

\begin{cor}\label{thm:Consistency_L_1_via_L_p}
	Let $(S_n)_{n \in \N}$ be a linear, monotone and unbiased randomized integration method and let $p\geq 1$. 
	Assume that $(S_nf)_{n\in\mathbb{N}}$ converges in probability or in mean for arbitrary $f \in L^p[0,1]^d$ to $I(f)$. 
	Then, for any $f \in L^1[0,1]^d$ we have that $(S_nf)_{n\in\mathbb{N}}$ converges in probability and in mean to $I(f)$.
\end{cor}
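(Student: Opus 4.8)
The plan is to deduce this directly from Theorem~\ref{thm:convergence_L_1} by choosing the dense subset $\mathcal{D}$ appropriately. Since $[0,1]^d$ carries a probability (in particular, finite) measure and $p \geq 1$, Hölder's (or Jensen's) inequality gives the inclusion $L^p[0,1]^d \subseteq L^1[0,1]^d$, so $\mathcal{D} := L^p[0,1]^d$ is a legitimate subset of $L^1[0,1]^d$ to which the theorem may be applied. First I would check that $\mathcal{D}$ is dense in $L^1[0,1]^d$ with respect to $\Vert \cdot \Vert_1$; this is routine, e.g.\ the simple functions (which belong to $L^q[0,1]^d$ for every $q \geq 1$, hence to $\mathcal{D}$) are already $\Vert \cdot \Vert_1$-dense in $L^1[0,1]^d$.

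With this choice of $\mathcal{D}$ the hypothesis splits into two cases. If $(S_n f)_{n\in\N}$ converges in mean to $I(f)$ for every $f \in L^p[0,1]^d = \mathcal{D}$, then statement~\eqref{en: in_mean_D} of Theorem~\ref{thm:convergence_L_1} holds. If instead $(S_n f)_{n\in\N}$ converges in probability to $I(f)$ for every $f \in \mathcal{D}$, then statement~\eqref{en: in_prob_D} holds. In either case the equivalences of Theorem~\ref{thm:convergence_L_1} yield both \eqref{en: in_mean_L1} and \eqref{en: in_prob_L1}, which is exactly convergence in mean and in probability of $(S_n f)_{n\in\N}$ to $I(f)$ for every $f \in L^1[0,1]^d$, as claimed.

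I do not anticipate a genuine obstacle here: the statement is essentially a repackaging of the theorem just proved. The only points deserving an explicit word are the density of $L^p[0,1]^d$ in $L^1[0,1]^d$ (handled via simple functions as above) and the observation that ``convergence in mean on $\mathcal{D}$'' is precisely item~\eqref{en: in_mean_D} while ``convergence in probability on $\mathcal{D}$'' is precisely item~\eqref{en: in_prob_D}, so that both hypotheses feed into the same chain of equivalences.
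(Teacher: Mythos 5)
Your proposal is correct and follows essentially the same route as the paper: both deduce the corollary from Theorem~\ref{thm:convergence_L_1} by exhibiting a $\Vert\cdot\Vert_1$-dense subset of $L^1[0,1]^d$ contained in $L^p[0,1]^d$ on which the hypothesis applies. The only difference is the choice of dense class --- you take $\mathcal{D}=L^p[0,1]^d$ itself (density certified via simple functions), while the paper uses $C^\infty_c(0,1)^d$ --- and this is an inessential variation.
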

\begin{proof}
	Denote by $C^\infty_c = C^\infty_c(0,1)^d$ the set of all functions that are infinitely often differentiable and whose support is a compact subset of $(0,1)^d$.
	We have that $C^\infty_c \subset L^p[0,1]^d$ for any $p\geq 1$, and it is a well known fact that $C^\infty_c$ is dense in $L^1[0,1]^d$ w.r.t. $\norm{\cdot }_1$.
	(In fact one has that $C^\infty_c$ is dense in $L^1(0,1)^d$. However, the open and closed unit cube only differ by a set of measure zero, so the denseness  easily extends to $L^1[0,1]^d$.)

	If $(S_nf)_{n\in\mathbb{N}}$ converges in probability or in mean for any $f \in L^p[0,1]^d$, then in particular we have this property for $\phi \in C_c^\infty$ and the statement of the corollary  is a consequence of Theorem \ref{thm:convergence_L_1}.
\end{proof}

Many structured methods are designed in such a way that they are unbiased and for $f \in L^2[0,1]^d$ their variance $\Var[ S_n f] = \E[\vert S_nf - I(f) \vert^2 ]$ decreases.
Assuming the underlying method is of this kind and satisfies the assumptions of the former corollary we can add the following tool for verifying the desired consistency for integrable functions.

\begin{cor}\label{coro:consistency_by_variance}
	Let $(S_n)_{n \in \N}$ be a  randomized integration method which is linear, monotone and unbiased. 
	Assume that for any $f \in L^2[0,1]^d$ we have 
	$
	\lim_{n \to \infty}\Var[S_nf] = 0.
	$
	Then, for $f \in L^1[0,1]^d$ we have that $(S_nf)_{n\in\mathbb{N}}$ converges in probability and mean to $I(f)$.
\end{cor}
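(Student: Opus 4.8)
The plan is to reduce the claim to Corollary~\ref{thm:Consistency_L_1_via_L_p} by verifying its hypothesis with $p = 2$. So the only thing to check is that for every $f \in L^2[0,1]^d$ the sequence $(S_n f)_{n \in \N}$ converges in probability (equivalently, in mean) to $I(f)$; once this is established, Corollary~\ref{thm:Consistency_L_1_via_L_p} immediately upgrades it to convergence in probability and in mean for all $f \in L^1[0,1]^d$, which is exactly the assertion.

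To get convergence for $f \in L^2[0,1]^d$, I would start from unbiasedness: since $\E[S_n f] = I(f)$, the assumed variance decay reads $\E[\,|S_n f - I(f)|^2\,] = \Var[S_n f] \to 0$, i.e.\ $S_n f \to I(f)$ in $L^2(\Omega)$. From here two elementary inequalities do the work. First, Markov's (or Chebyshev's) inequality gives, for any $\eps > 0$,
\[
\mathbb{P}\left[\,|S_n f - I(f)| > \eps\,\right] \leq \frac{1}{\eps^2}\,\E\left[\,|S_n f - I(f)|^2\,\right] \xrightarrow[n\to\infty]{} 0,
\]
so $(S_n f)_{n\in\N}$ converges in probability to $I(f)$. (Alternatively, Jensen's inequality yields $\E[\,|S_n f - I(f)|\,] \leq \sqrt{\E[\,|S_n f - I(f)|^2\,]} \to 0$, giving convergence in mean directly.) Either way the hypothesis of Corollary~\ref{thm:Consistency_L_1_via_L_p} is met.

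There is no real obstacle here; the statement is a short corollary obtained by chaining $L^2(\Omega)$-convergence (from vanishing variance and unbiasedness) into convergence in probability/mean on $L^2[0,1]^d$, and then invoking the density-based transfer of Corollary~\ref{thm:Consistency_L_1_via_L_p} to reach all of $L^1[0,1]^d$. The only point worth stating explicitly is the use of unbiasedness to identify $\Var[S_n f]$ with the mean squared error $\E[\,|S_n f - I(f)|^2\,]$, which is precisely the convention recorded just before the corollary.
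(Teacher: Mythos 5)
Your proposal is correct and follows essentially the same route as the paper: use unbiasedness to read the variance as the mean squared error, deduce convergence in mean (via Jensen/Cauchy--Schwarz) or in probability (via Chebyshev) for $f \in L^2[0,1]^d$, and then invoke Corollary~\ref{thm:Consistency_L_1_via_L_p} with $p=2$. In fact your bound $\E\left[\vert S_n f - I(f)\vert\right] \leq \left(\Var[S_n f]\right)^{1/2}$ is the precise form of the inequality the paper states without the square root, so your write-up is, if anything, slightly more careful.
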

\begin{proof}
	By Cauchy-Schwarz inequality we have for any $f\in L^2[0,1]^d$ that
	\[
	\mathbb{E}[\vert S_nf-I(f)\vert] \leq \Var[S_nf].
	\]
	By assumption this yields convergence in mean on $L^2[0,1]^d$, which implies by Corollary \ref{thm:Consistency_L_1_via_L_p} the claimed result.
\end{proof}

\begin{rem}\label{rem:wlln_general_domains}
	In Theorem~\ref{thm:convergence_L_1} and its consequences Corollary~\ref{thm:Consistency_L_1_via_L_p} and Corollary~\ref{coro:consistency_by_variance} we restricted ourselves to $G=[0,1]^d$, however, 
	this assumption can be relaxed to obtain convergence in probability and mean for estimating $I(g)$ as in \eqref{eq: expect}.
\end{rem}

Now we turn to examples. 
First, we consider estimators of the form
\begin{equation}
	\label{eq: est_repr}
	S_n(f) = \frac{1}{n} \sum_{y\in P_n} f(y), 
\end{equation}
where $S_n $ uses exactly $n$ function evaluations, that is, $P_n=\{ X_1^{(n)},\dots,X_n^{(n)} \}\subset [0,1]^d$ is a random point set, i.e., $X_i^{(n)}$ denotes a $[0,1]^d$-valued random variable for any $i=1,\dots,n$. Hence $(S_n)_{n \in \N}$ is a Monte Carlo method with $N_n \equiv~n$.
Obviously, $S_n$ is linear, monotone and whenever $P_n$ consists of uniformly distributed random variables in $[0,1]^d$ it is also unbiased.

The first two examples that we consider rely on exploiting the convergence on dense subsets of $L^1[0,1]^d$, see Theorem~\ref{thm:convergence_L_1} and Corollary~\ref{thm:Consistency_L_1_via_L_p}.

\begin{example}\label{ex:wlln_t_d_sequences}
	We consider randomized $(t,d)$-sequences as in \cite{owen1995randomly}. 
	The underlying deterministic points are $(t,d)$-sequences, which also prove to be useful for numerical integration (cf. \cite{dick2010digital}), and randomization is done, roughly speaking, by randomly permuting digits. 
	One obtains a sequence of random points, denoted by $(X_n)_{n \in \N}$, which consists of uniformly distributed points and satisfies crucial properties of a $(t,d)$-sequence with probability one, see \cite{owen1995randomly}.
	Setting $P_n = \{X_1, \dots, X_n\}$ and defining $S_n$ as specified in \eqref{eq: est_repr} we indeed have a linear, monotone and unbiased estimator.
	
	For $p>1$ the method $(S_n)_{n \in \N}$
	satisfies a SLLN, as recently shown in \cite{owen2021strong}, and consequently also a WLLN for $f \in L^p[0,1]^d$. 
	Thus all requirements to apply Corollary~\ref{thm:Consistency_L_1_via_L_p} are met and for $f \in L^1[0,1]^d$ it follows that 
	$(S_nf)_{n\in\mathbb{N}}$ converges in probability and in mean to $I(f)$.
\end{example}

\begin{example}\label{ex:cranley_patterson}
	A Cranley-Patterson rotation, cf. \cite{cranley76randomization}, randomizes deterministic points $\{a_1^{(n)}, \dots, a^{(n)}_n\}\subset [0,1]^d$ to $P_n=\{X^{(n)}_1, \dots, X^{(n)}_n\}$ by setting $X^{(n)}_i := a^{(n)}_i + U \mod 1$ (coordinate-wise) for $i=1,\dots,n$, where $U \sim \text{Unif}[0,1]^d$.
	The $X^{(n)}_i$'s are again uniformly distributed in $[0,1]^d$, hence $S_n$ as given in \eqref{eq: est_repr} is linear, monotone and unbiased.
	We refer to \cite[Chapter 17.3]{owen_2013_mcbook} for a more detailed discussion about this method.
	
	Employing \cite[Proposition 2.18]{dick2010digital}, which is a version of the Koksma-Hlawka inequality, and \cite[Inequality (17.9)]{owen_2013_mcbook} it follows that for $\phi \in C^\infty_c$ with probability one
	\[\vert S_n \phi - I(\phi) \vert \leq c_1 \cdot D(\{X^{(n)}_1, \dots, X^{(n)}_n\}) \leq c_2 \cdot D(\{a^{(n)}_1, \dots, a^{(n)}_n\}).
	\]
	Here $c_1,c_2$ are finite constants depending on $\phi,d$ only. Moreover, $C^\infty_c$ is as in the proof of Corollary~\ref{thm:Consistency_L_1_via_L_p} and $D(\cdot)$ denotes the discrepancy (of a point set), see for instance the book \cite{dick2010digital} for details.
	
	Assume now $\lim_{n \to \infty}D(\{a^{(n)}_1, \dots, a^{(n)}_n\}) = 0$, then we immediately obtain convergence in mean of $S_n$ for functions in $C^\infty_c$ and by virtue of Theorem~\ref{thm:convergence_L_1} it follows that $(S_nf)_{n\in\mathbb{N}}$ converges in probability and in mean for $f\in L^1[0,1]^d$.
	
	Under rather mild conditions, regarding the construction of a \textit{Hammersley point set} or a \textit{lattice point set} or a \textit{$(t,d)$-sequence} 
	the discrepancy for any of these point sets decreases to zero. 
	However, it might be necessary to switch to a subsequence,
	cf. \cite{dick2010digital}. 
	
	We want to emphasize here that no variance estimate for $S_n$ has been used.
	In particular obtaining useful variance estimates of $S_n$ for Cranley-Patterson rotated general point sets $\{a_1^{(n)}, \dots ,a_n^{(n)} \}$ with decreasing discrepancy seems to a be a difficult task, especially since we allow here that $a_j^{(n)} \neq a_j^{(k)}$ for $n \neq k$ and $1 \leq j \leq \min\{n,k\}$.
\end{example}

In the following two examples it is very convenient to exploit variance estimates, i.e., we apply Corollary~\ref{coro:consistency_by_variance}.

\begin{example}\label{ex:wlln_LHS}
	We study Latin hypercube sampling \cite{McKay1979}. 
	The corresponding algorithm produces uniformly distributed points satisfying stratification properties.
	We refer to  \cite{owen_2013_mcbook} for a detailed introduction and for more recent results to \cite{gnewuch2021discrepancy}.
	
	Using those points for $P_n$ within $S_n$ of \eqref{eq: est_repr} yields a linear, monotone and unbiased estimator. 
	From \cite{Stein}, see also \cite[Proposition~10.1]{owen_2013_mcbook}, for $f\in L^2[0,1]^d$ it follows that $\lim_{n \to \infty} \Var[S_n f]=0$ such that from Corollary~\ref{coro:consistency_by_variance} we obtain the statement of convergence in mean and probability of $(S_nf)_{n\in\mathbb{N}}$ to $I(f)$ for $f\in L^1[0,1]^d$.
	Consistency of Latin hypercube sampling is also very briefly mentioned in \cite{aistleitner2012central,packham2008latin}. 
	There it is meant in terms of convergence in probability for $f\in L^2[0,1]^d$ and one even has a SLLN in this case, cf. \cite{loh1996latin}.  
\end{example}

We add an example where $S_n$ does not take the form of \eqref{eq: est_repr}.

\begin{example}\label{ex:frolov_wlln}
	Randomized Frolov points, as studied in \cite{krieg2017universal} and later in \cite{ullrich2017monte}, 
	rely on a random shift and dilation of (deterministic) Frolov points (cf. \cite{ullrich2016upper} for an introductory paper), which themselves provide a powerful tool for numerical integration in Sobolev spaces, see
	\cite{ullrich2016role} for a survey.
	
	For technical reasons we extend any $f \colon [0,1]^d \to \R$ to a function defined on $\R^d$ by setting it zero outside the unit cube. 
	Hence for the integral of interest holds $I(f)=\int_{[0,1]^d}f(x) \d x = \int_{\R^d}f(x) \d x$.
	
	Here for any $n \in \N$ the estimator $S_n$ is given by
	\[
	S_nf = \frac{1}{\vert \det (A_n ) \vert} \sum_{y\in P_n} f(y),
	\]
	where $A_n\in \mathbb{R}^{d\times d}$ is a suitable random matrix and $P_n$ is the set of corresponding randomized Frolov points, which in particular depend on $A_n$. 
	We emphasize that $N_n = \mbox{card}(P_n)$, where $\mbox{card}(P_n)$ denotes the cardinality of $P_n$, is a random variable and
	\[
	\mathbb{E}\,[\vert\det(A_n) \vert] = \E\left[N_n\right]= n.
	\] 
	For details we refer to \cite[Section 2.1]{ullrich2017monte}.
	Obviously one has a linear and monotone estimator and \cite[Lemma 3]{krieg2017universal} guarantees that $S_n$ is also unbiased.

	For $f \in L^2[0,1]^d$, extended as above, we employ \cite[Theorem 1.1]{ullrich2017monte} and Plancherel's identity (see e.g. \cite[Theorem 2.2.14]{grafakos2014classical}) to deduce that 
	\begin{equation}\label{equ:Frolov_mean_squared}
		\Var\left[ S_n f \right]=
		\mathbb{E}\left[ \vert S_n f - I(f) \vert^2 \right]
		\leq 
		\frac{c \cdot\norm{f}_{2}^2}{n},
	\end{equation}
	where $c\in(0,\infty)$ does not depend on $f$ and $n$.
	Thus Corollary~\ref{coro:consistency_by_variance} is applicable and it follows that $(S_nf)_{n\in\mathbb{N}}$ converges in probability and in mean to $I(f)$ 		
	for $f \in L^1[0,1]^d$. 
	Note that in \cite{ullrich2017monte} the definition and analysis of this method is done for more general domains than $[0,1]^d$. 
	Therefore taking Remark \ref{rem:wlln_general_domains} into account this example can be generalized to different domains.
\end{example}

\section{{Almost sure convergence via median modification}
}\label{sec:prob_amplification_and_SLLN}

In this section we derive almost sure convergence statements for $f \in L^p[0,1]^d$, with $p>1$, by using median modified methods and combining techniques that were recently used in \cite{kunsch2019optimal,owen2021strong,kunsch2019solvable}. 
Let $(S_n)_{n \in \N}$ be  
a randomized integration method, $k\in\mathbb{N}$ an
odd number and 
$f\in L^p[0,1]^d$ for some $p>1$.
Throughout the whole section we write $S_{n,k}$ for the median of $k$ independent realizations of $S_n$, denoted by $S_n^{(1)}, \dots, S_n^{(k)} $, i.e.,
\[
S_{n,k}f = \med \{S_n^{(1)}f, \dots, S_n^{(k)}f\}.
\]  
We start with an auxiliary tool, see Proposition \ref{prop:SLLN_median_modified_methods}, which is then applied to obtain SLLNs for median modified methods relying on Latin hypercube sampling and randomized Frolov points.

Slightly modifying \cite[Proposition~2.1, see also (2.6)]{niemiro_pokarowski_2009} in \cite[Proposition~1]{kunsch2019optimal} the following lemma was proven.

\begin{lem}\label{lemma:median_trick}
	For $f\in L^1[0,1]^d$, $n\in \N$ and $\eps >0$ assume that $S_n$ satisfies 
	\[
	\mathbb{P}\left[ \vert S_nf - I(f) \vert >\eps  \right] \leq \alpha,
	\]
	for some $\alpha \geq 0$.
	Then, for any odd $k$ we have
	\[ 
	\mathbb{P}\left[\vert S_{n,k}f - I(f) \vert > \eps \right] \leq \alpha^{k/2} 2^k.
	\]
\end{lem}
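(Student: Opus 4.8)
The plan is to run the classical ``median trick'': translate the deviation event for the median $S_{n,k}f$ into a counting statement about how many of the $k$ independent copies $S_n^{(1)}f,\dots,S_n^{(k)}f$ deviate from $I(f)$ by more than $\eps$, and then bound that count by a union bound exploiting independence.

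First I would record the elementary observation underlying the median. Since $k$ is odd, set $m=(k+1)/2$. If $S_{n,k}f>I(f)+\eps$, then by definition of the median at least $m$ of the values $S_n^{(1)}f,\dots,S_n^{(k)}f$ are strictly larger than $I(f)+\eps$; symmetrically, if $S_{n,k}f<I(f)-\eps$, then at least $m$ of them are strictly smaller than $I(f)-\eps$. In either case at least $m$ of the events $E_i:=\{\,\vert S_n^{(i)}f-I(f)\vert>\eps\,\}$, $i=1,\dots,k$, occur. Hence, writing $N=\sum_{i=1}^k\mathds{1}_{E_i}$,
\[
\mathbb{P}\left[\vert S_{n,k}f-I(f)\vert>\eps\right]\le\mathbb{P}[N\ge m].
\]
Next I would estimate $\mathbb{P}[N\ge m]$. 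Because $S_n^{(1)},\dots,S_n^{(k)}$ are independent, the events $E_i$ are independent with $\mathbb{P}[E_i]\le\alpha$, so a union bound over all $m$-element subsets $A\subseteq\{1,\dots,k\}$ gives
\[
\mathbb{P}[N\ge m]\le\sum_{\vert A\vert=m}\mathbb{P}\Bigl[\bigcap_{i\in A}E_i\Bigr]=\sum_{\vert A\vert=m}\prod_{i\in A}\mathbb{P}[E_i]\le\binom{k}{m}\alpha^m.
\]
It then remains only to simplify: one may assume $\alpha\le1$, since otherwise $\alpha^{k/2}2^k\ge1$ and there is nothing to prove, whence $\alpha^m=\alpha^{(k+1)/2}\le\alpha^{k/2}$, while $\binom{k}{m}\le\sum_{j=0}^k\binom{k}{j}=2^k$. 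Chaining the three displays yields $\mathbb{P}[\vert S_{n,k}f-I(f)\vert>\eps]\le\alpha^{k/2}2^k$.

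I do not expect a serious obstacle here; the argument is short and self-contained. The only point requiring a moment's care is the combinatorial characterization of a median deviation in the first step, namely pinning down the threshold $m=(k+1)/2$ and handling the two tails symmetrically, together with the harmless edge case $\alpha\ge1$. As an alternative to the union bound one could couple $N$ with a genuine $\mathrm{Binomial}(k,\alpha)$ variable $B$ (using $\mathbb{P}[E_i]\le\alpha$) and estimate $\mathbb{P}[B\ge m]=\sum_{j\ge m}\binom{k}{j}\alpha^j(1-\alpha)^{k-j}\le\alpha^{k/2}\sum_{j=0}^k\binom{k}{j}=\alpha^{k/2}2^k$, which produces the same constant.
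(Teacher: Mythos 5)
Your proof is correct, and it is worth noting that the paper itself does not prove this lemma at all: it simply cites \cite[Proposition~1]{kunsch2019optimal}, which in turn adapts \cite[Proposition~2.1]{niemiro_pokarowski_2009}. Your argument is the standard self-contained route to exactly that statement: the combinatorial step (median deviation forces at least $m=(k+1)/2$ of the independent copies to deviate, since the $m$-th order statistic exceeding $I(f)+\eps$ forces ranks $m,\dots,k$ to exceed it, and symmetrically for the lower tail) is the same idea underlying the cited proofs; where you then use a union bound over $m$-element subsets, $\mathbb{P}[N\ge m]\le\binom{k}{m}\alpha^m\le 2^k\alpha^{k/2}$, the references bound the binomial tail directly, e.g. $\mathbb{P}[\mathrm{Bin}(k,\alpha)\ge k/2]\le\bigl(2\sqrt{\alpha(1-\alpha)}\bigr)^k\le 2^k\alpha^{k/2}$, which retains a factor $(1-\alpha)^{k/2}$ that is simply discarded to reach the stated constant. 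So your bound matches the lemma exactly, your handling of the edge case $\alpha\ge 1$ (and $\alpha=0$) is harmless and correct, and your argument also makes transparent the point of the paper's subsequent remark: nothing in the proof depends on how many function evaluations $S_n$ uses, only on the independence of the $k$ copies and the per-copy failure bound $\alpha$. One could slightly strengthen the conclusion by keeping $\binom{k}{m}$ or the $(1-\alpha)$ factor, but for the purpose the lemma serves (Borel--Cantelli in Proposition~3.4) the cruder constant $2^k$ is all that is needed.
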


\begin{rem}
	In \cite{kunsch2019optimal} Lemma~\ref{lemma:median_trick} is proven in the case where $S_n$ uses exactly $n$ function evaluations. 
	However, reviewing the proof of \cite[Proposition 2.1]{niemiro_pokarowski_2009} it follows that the number of used function values does not matter   
	and Lemma~\ref{lemma:median_trick} also holds for general randomized integration methods.
\end{rem}
\begin{rem}
	Recently for median modified randomized quasi-Monte Carlo methods, see \cite{pan2021super,pan2022super,goda2022construction}, worst case error bounds that hold with high probability have been proven. There, more regularity of the integrand is required and non-asymptotic statements similar as in \cite{kunsch2019optimal} are available.
\end{rem}

Combining Lemma~\ref{lemma:median_trick} and the Riesz-Thorin interpolation theorem, see Theorem~\ref{thm:Riesz_Thorin} below, we obtain the following result.

\begin{prop}\label{prop:SLLN_median_modified_methods}	
	Let $p \in (1,2)$, $k > 2/(p-1)$ be an odd number and  
	let $(S_n)_{n \in \N}$ be a linear, monotone and unbiased randomized integration method.
	Assume there is a constant $c \in (0, \infty)$ such that for any $n \in \N$ and any $f \in L^2[0,1]^d$ we have 
	\[
	\Var\left[ S_nf \right] \leq \frac{c \cdot \norm{f}_{2}^2}{n}.
	\]
	Then, the median of $k$ independent copies of $S_nf$ satisfies a SLLN for $f \in L^p[0,1]^d$, i.e., for any $f \in L^p[0,1]^d$ the sequence of random variables $(S_{n,k}f)_{n\in\mathbb{N}}$ converges almost surely to $I(f)$.
\end{prop}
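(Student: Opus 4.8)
The plan is to first establish a quantitative tail bound for the median-modified method $S_{n,k}$ on $L^p[0,1]^d$ for $p \in (1,2)$, and then combine it with a Borel–Cantelli argument to obtain almost sure convergence. The starting point is the variance hypothesis, which by Chebyshev's inequality gives, for $f \in L^2[0,1]^d$ and any $\eps > 0$,
\[
\mathbb{P}\left[ \vert S_n f - I(f) \vert > \eps \right] \leq \frac{\Var[S_n f]}{\eps^2} \leq \frac{c \cdot \norm{f}_2^2}{n \eps^2}.
\]
To push this to $f \in L^p[0,1]^d$ with $p < 2$, I would invoke Riesz–Thorin. The operator $f \mapsto S_n f - I(f)$ is linear, and on $L^1$ it is bounded as a map into $L^1(\Omega)$ with norm at most $2$ (the remark after Theorem~\ref{thm:convergence_L_1} gives $\norm{S_n f}_{L^1(\Omega)} \leq \norm{f}_1$, and $\norm{I(f)}_1 \le \norm{f}_1$), while on $L^2$ it is bounded into $L^2(\Omega)$ with norm at most $\sqrt{c/n}$. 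Interpolating at the point $1/p = (1-\theta)/1 + \theta/2$, i.e. $\theta = 2(1 - 1/p) = 2(p-1)/p \in (0,1)$, yields that $f \mapsto S_n f - I(f)$ maps $L^p[0,1]^d$ into $L^p(\Omega)$ with norm at most $2^{1-\theta} (c/n)^{\theta/2}$. Hence
\[
\E\left[ \vert S_n f - I(f) \vert^p \right] \leq 2^{(1-\theta)p}\, (c/n)^{\theta p/2} \, \norm{f}_p^p,
\]
and by Markov's inequality applied to the $p$-th power,
\[
\mathbb{P}\left[ \vert S_n f - I(f) \vert > \eps \right] \leq \frac{2^{(1-\theta)p}\, c^{\theta p /2}\, \norm{f}_p^p}{\eps^p \, n^{\theta p/2}}.
\]
The key exponent here is $\theta p / 2 = p - 1$, so the tail decays like $n^{-(p-1)}$.

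Next I would feed this into Lemma~\ref{lemma:median_trick}. With $\alpha = \alpha_n = C_{f,\eps}\, n^{-(p-1)}$ where $C_{f,\eps} = 2^{(1-\theta)p} c^{\theta p/2} \norm{f}_p^p / \eps^p$, the lemma gives, for odd $k$,
\[
\mathbb{P}\left[ \vert S_{n,k} f - I(f) \vert > \eps \right] \leq \alpha_n^{k/2}\, 2^k = 2^k\, C_{f,\eps}^{k/2}\, n^{-k(p-1)/2}.
\]
Since $k > 2/(p-1)$ by hypothesis, the exponent $k(p-1)/2 > 1$, so $\sum_{n=1}^\infty \mathbb{P}[\vert S_{n,k} f - I(f)\vert > \eps] < \infty$. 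By the Borel–Cantelli lemma, for each fixed $\eps > 0$ the event $\{\vert S_{n,k} f - I(f)\vert > \eps\}$ occurs only finitely often almost surely; taking $\eps$ along a sequence tending to zero (say $\eps = 1/m$, $m \in \N$) and intersecting the corresponding full-measure events shows $S_{n,k} f \to I(f)$ almost surely. This establishes the claim for $p \in (1,2)$; the case $p \ge 2$ reduces to $p \in (1,2)$ since $L^p[0,1]^d \subset L^{p'}[0,1]^d$ for $1 < p' < p$ on a probability space, but the proposition as stated only asserts $p \in (1,2)$ so I would not need that.

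The main obstacle I anticipate is the careful application of Riesz–Thorin: one must verify that $f \mapsto S_n f - I(f)$ is a genuine linear operator between the relevant $L^p$ spaces (linearity is assumed, and the $L^1$ and $L^2$ bounds are in hand), and that the complex interpolation framework applies cleanly to operators taking values in $L^p(\Omega)$ rather than $L^p$ of the cube — this is standard, but worth stating precisely, perhaps by referring to the version of the theorem labelled Theorem~\ref{thm:Riesz_Thorin} that the paper promises below. A secondary subtlety is bookkeeping of the constants so that the final exponent of $n$ is exactly $k(p-1)/2$ and the condition $k > 2/(p-1)$ is seen to be exactly what makes the series summable; everything else (Chebyshev, Markov, Borel–Cantelli, the countable intersection over $\eps = 1/m$) is routine.
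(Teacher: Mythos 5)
Your proposal is correct and follows essentially the same route as the paper's proof: interpolating the linear operator $T_n f = S_n f - I(f)$ between the $L^1$ bound (norm at most $2$, from monotonicity and unbiasedness) and the $L^2$ variance bound via Riesz--Thorin to get the $n^{-(p-1)}$ tail estimate, then applying Lemma~\ref{lemma:median_trick} and Borel--Cantelli under the condition $k>2/(p-1)$. Your extra remarks (the initial Chebyshev step and the explicit intersection over $\eps=1/m$) are harmless elaborations of what the paper leaves implicit.
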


\begin{proof}
	Define the measure spaces $\mathcal{A}_1 = ([0,1]^d, \mathcal{B}([0,1]^d), \lambda_d)$ and $\mathcal{A}_2 = (\Omega, \mathcal{F}, \mathbb{P})$, where $\lambda_d$ denotes the $d$-dimensional Lebesgue measure.
	Set $p_1=1$, $p_2=2$ and $\theta = 2-\frac{2}{p}$. For $i \in \{1,2\}$ we denote by $L^{p_i}(\mathcal{A}_1)$ and $L^{p_i}(\mathcal{A}_2)$ the corresponding Lebesgue spaces.
	Then, the operators $T_n \colon L^{p_i}(\mathcal{A}_1) \to L^{p_i}(\mathcal{A}_2)$ with $T_nf = S_nf-I(f)$ are linear and bounded. 
	An application of the Riesz-Thorin interpolation theorem, see Theorem~\ref{thm:Riesz_Thorin} below, yields
	\[
	\norm{T_n}_{L^p(\mathcal{A}_1) \to L^p(\mathcal{A}_2)} 
	\leq 
	\frac{2^{1-\theta} c^{\theta/2}}{n^{\theta/2}} 
	= 
	\frac{c_p}{n^{1-\frac{1}{p}}},
	\]
	with $c_p=2^{2/p-1} c^{1-1/p}$.
	Here we used, with the same notation as in Theorem~\ref{thm:Riesz_Thorin}, that $M_1 \leq 2$ since $S_n$ is unbiased and $M_2 \leq (c/n)^{1/2}$ according to the assumption about the variance of $S_n$.
	Therefore, by Markov's inequality, for $f \in L^p[0,1]^d$ and $\eps>0$ we get
	\begin{align}\label{equ:p_mean_error_bound}
		\mathbb{P}\left[ \vert S_nf - I(f) \vert > \eps \right] 
		&\leq
		\frac{1}{\eps^p} \E \left[\vert T_n f\vert^p\right] \\
		&\leq \nonumber 
		\left( \frac{1}{\eps} \norm{T_n}_{L^p(\mathcal{A}_1) \to L^p(\mathcal{A}_2)} \norm{f}_p\right)^p  \\
		&\leq \nonumber
		\left( \frac{c_p \norm{f}_p}{\eps}\right)^p \frac{1}{n^{p-1}}.
	\end{align}
	Applying Lemma \ref{lemma:median_trick} we obtain
	\[
	\mathbb{P}\left[ \vert S_{n,k}f-I(f) \vert > \eps \right]
	\leq 
	2^k \left( \frac{c_p \norm{f}_p}{\eps}\right)^{pk/2} \frac{1}{n^{(p-1)k/2}}.
	\] 
	By the fact that $k > 2/(p-1)$ we have $\sum_{n=1}^{\infty} n^{-(p-1)k/2} < \infty$ and the result follows by the lemma of Borel-Cantelli.
\end{proof}

\begin{rem}\label{rem:SLLN_subsequence}
	Note that without using the median of several independent copies of $S_n$ it is still possible to find SLLNs for $f \in L^p[0,1]^d$ with $1<p<2$ by considering $(S_{n_j})_{j\in \N}$ for a suitably chosen (sub)sequence $(n_j)_{j \in \N}$.
	Namely, for a method satisfying the same assumptions as in Proposition \ref{prop:SLLN_median_modified_methods} we obtain by \eqref{equ:p_mean_error_bound} that for $\eps >0$ and any $n_j\in\mathbb{N}$ holds
	\[
	\mathbb{P}\left[ \vert S_{n_j} f -I(f) \vert > \eps  \right] \leq \left( \frac{c_p \norm{f}_p}{\eps}\right)^p \frac{1}{n_j^{p-1}}, 
	\]
	where $c_p$ is as in the proof of Proposition \ref{prop:SLLN_median_modified_methods}.
	Thus by choosing the sequence $(n_j)_{j \in \N}$ in such a way that $\sum_{j=1}^{\infty} {n_j^{-p+1}} < \infty$ it follows from the lemma of Borel-Cantelli that $(S_{n_j}f)_{j\in\mathbb{N}}$ converges almost surely to $I(f)$.
\end{rem}

We illustrate the applicability of the former proposition for Latin hypercube sampling and randomized Frolov points. 

\begin{example}
	We consider $S_n$ with $P_n$ based on Latin hypercube sampling as in Example~\ref{ex:wlln_LHS}.
	We argued there already that $S_n$ is linear, monotone and unbiased. 
	By \cite[Corollary 17.1]{owen_2013_mcbook} a variance bound for $S_n$ is established. 
	It allows us to use Proposition \ref{prop:SLLN_median_modified_methods} and hence for the median $S_{n,k}$, with $k > 2/(p-1)$ odd, we obtain a SLLN. 
	
	Furthermore, it is also possible to deduce SLLNs for suitably chosen subsequences, for instance as suggested in Remark \ref{rem:SLLN_subsequence}.
\end{example}

\begin{example}
	We continue with $S_n$ from Example~\ref{ex:frolov_wlln} based on randomized Frolov points, where it was already shown that $S_n$ is linear, monotone and unbiased.
	By \eqref{equ:Frolov_mean_squared} we have a variance estimate as required in Proposition~\ref{prop:SLLN_median_modified_methods}, 
	resulting in a SLLN for $f \in L^p[0,1]^d$, for the median $S_{n,k}$ of $k > 2/(p-1)$ independent copies ($k$ odd), or for $S_n$ by passing to a suitable subsequence (see Remark~\ref{rem:SLLN_subsequence}).
\end{example}

\begin{appendix}
	\section{Auxiliary results}
	\renewcommand{\thesection}{\Alph{section}}
	\setcounter{section}{1}		
	For the convenience of the reader we add here a standard result about the characterization of convergence in mean that can be, for example, found in \cite[Theorem 5.12]{Kallenberg2021foundations}. 
	\begin{lem}\label{lemma:WLLN_mean_convergence_equivalence}
		Let $(Y_n)_{n\in\mathbb{N}}$ be a sequence of real-valued integrable random variables as well as $Y$ being an integrable real-valued random variable. 
		Then the following statements are equivalent:
		\begin{enumerate}[(i)]
			\item We have $\lim_{n \to \infty}\E\left[ \vert Y_n - Y\vert  \right] = 0$.
			\item The sequence of random variables $(Y_n)_{n\in\mathbb{N}}$ converges in probability to $Y$ and $\lim_{n\to\infty} \E[\vert Y_n\vert ] = \E[\vert Y \vert ]$.
		\end{enumerate}
	\end{lem}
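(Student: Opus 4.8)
The plan is to establish the two implications separately; (i) $\Rightarrow$ (ii) is routine, while the content lies in (ii) $\Rightarrow$ (i), which I would treat by a Scheff\'e-type argument.

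First I would prove (i) $\Rightarrow$ (ii). Convergence in probability is immediate from Markov's inequality, since for every $\varepsilon>0$ we have $\mathbb{P}[\abs{Y_n-Y}>\varepsilon]\le \varepsilon^{-1}\E[\abs{Y_n-Y}]\to 0$. For the convergence of the expectations, the reverse triangle inequality gives $\bigl\vert \abs{Y_n}-\abs{Y}\bigr\vert\le \abs{Y_n-Y}$, whence $\bigl\vert \E[\abs{Y_n}]-\E[\abs{Y}]\bigr\vert\le \E[\abs{Y_n-Y}]\to 0$.

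For (ii) $\Rightarrow$ (i) I would set $Z_n=\abs{Y_n}+\abs{Y}-\abs{Y_n-Y}$. The triangle inequality $\abs{Y_n-Y}\le\abs{Y_n}+\abs{Y}$ shows $Z_n\ge 0$, and $\abs{Y_n}\le\abs{Y_n-Y}+\abs{Y}$ shows $Z_n\le 2\abs{Y}$, so $(Z_n)_{n\in\N}$ is dominated by the integrable random variable $2\abs{Y}$. Since $Y_n\to Y$ in probability, the continuous mapping theorem gives $Z_n\to 2\abs{Y}$ in probability, and an application of dominated convergence (in the version valid for sequences converging in probability) yields $\lim_{n\to\infty}\E[Z_n]=2\E[\abs{Y}]$. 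Rearranging and using the hypothesis $\E[\abs{Y_n}]\to\E[\abs{Y}]$ we conclude
\[
\lim_{n\to\infty}\E[\abs{Y_n-Y}]=\lim_{n\to\infty}\bigl(\E[\abs{Y_n}]+\E[\abs{Y}]-\E[Z_n]\bigr)=2\E[\abs{Y}]-2\E[\abs{Y}]=0,
\]
which is (i).

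The only delicate point, and the one I expect to require the most care, is the use of dominated convergence for a sequence that converges merely in probability rather than almost surely. This is handled by the usual subsequence principle: to show $\E[\abs{Y_n-Y}]\to 0$ it suffices that every subsequence admit a further subsequence along which this holds; given such a subsequence, extract from the corresponding terms of $(Y_n)_{n\in\N}$ a sub-subsequence converging to $Y$ almost surely, apply the classical dominated convergence theorem to the associated $Z_n$'s (still dominated by $2\abs{Y}$), and combine with $\E[\abs{Y_n}]\to\E[\abs{Y}]$ exactly as above. All remaining steps are elementary consequences of Markov's inequality and the triangle inequality, so no further obstacles arise.
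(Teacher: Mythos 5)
Your argument is correct, but note that the paper does not prove this lemma at all: it is stated in the appendix purely ``for the convenience of the reader'' with a citation to \cite[Theorem 5.12]{Kallenberg2021foundations}, so there is no in-paper proof to match. What you supply is the standard self-contained Scheff\'e-type proof of this $L^1$-convergence characterization: the easy direction via Markov and the reverse triangle inequality, and the converse via the nonnegative, dominated sequence $Z_n=\abs{Y_n}+\abs{Y}-\abs{Y_n-Y}$, with the dominated convergence theorem applied under convergence in probability. Your handling of the one delicate point is the right one --- reduce to almost sure convergence along sub-subsequences (every subsequence of $\E[\abs{Y_n-Y}]$ admits a further subsequence tending to $0$, obtained by passing to an a.s.\ convergent sub-subsequence of $(Y_n)$ and applying classical dominated convergence to the corresponding $Z_n$, still dominated by $2\abs{Y}$), and then conclude for the full sequence. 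The identity $\E[\abs{Y_n-Y}]=\E[\abs{Y_n}]+\E[\abs{Y}]-\E[Z_n]$ is legitimate since all terms are finite by integrability, so the rearrangement is sound. In short: the citation buys brevity, your proof buys self-containedness; both are adequate, and your version would serve equally well as an appendix proof.
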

	
	In the proof of Proposition~\ref{prop:SLLN_median_modified_methods} we used the following version of the Riesz-Thorin interpolation theorem that can be found in \cite[Theorem 1.3.4]{grafakos2014classical}.
	
	\begin{thm}[Riesz-Thorin]\label{thm:Riesz_Thorin}
		Let $\mathcal{A}_1$ and $\mathcal{A}_2$ be measure spaces. 
		Let $T$ be a linear operator from $L^{p_1}(\mathcal{A}_1)$ to $L^{p_1}(\mathcal{A}_2)$ and also a linear operator from 
		$L^{p_2}(\mathcal{A}_1)$ to $L^{p_2}(\mathcal{A}_2)$, for some $1 \leq p_1 \leq p_2 \leq \infty $.
		Assume that 
		\[
		\norm{T}_{L^{p_1}(\mathcal{A}_1) \to L^{p_1}(\mathcal{A}_2)} \leq M_1  
		\qquad\text{and}\qquad
		\norm{T}_{L^{p_2}(\mathcal{A}_1) \to L^{p_2}(\mathcal{A}_2)} \leq M_2.  
		\]
		For $\theta \in (0,1)$ define $p$ via $\frac{1}{p} = \frac{1-\theta}{p_1}+\frac{\theta}{p_2}$.
		Then, $T$ is also a linear operator from $L^p(\mathcal{A}_1)$ to $L^p(\mathcal{A}_2)$ satisfying
		\[
		\norm{T}_{L^p(\mathcal{A}_1) \to L^p(\mathcal{A}_2)} \leq M_1^{1-\theta}M_2^{\theta}.
		\]
	\end{thm}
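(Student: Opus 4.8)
The plan is to prove this via the classical complex-interpolation argument, whose analytic heart is Hadamard's three-lines theorem (a Phragm\'{e}n--Lindel\"{o}f consequence of the maximum modulus principle). If $p_1 = p_2$ then $p = p_1$ and $M_1^{1-\theta}M_2^{\theta} \geq \min\{M_1, M_2\} \geq \norm{T}_{L^{p_1}(\mathcal{A}_1) \to L^{p_1}(\mathcal{A}_2)}$, so the assertion is immediate; hence I may assume $p_1 < p_2$, in which case $p_1 < p < p_2 \leq \infty$, so $1 < p < \infty$ and the conjugate exponent $p'$ is finite. Writing $\mu_1, \mu_2$ for the measures of $\mathcal{A}_1, \mathcal{A}_2$, duality gives $\norm{Tf}_{L^p(\mathcal{A}_2)} = \sup \abs{ \int (Tf)\, g \, \d \mu_2 }$ over simple $g$ with finite-measure support and $\norm{g}_{p'} \leq 1$; by density of simple functions it then suffices to bound $\abs{ \int (Tf)\, g \, \d \mu_2 }$ by $M_1^{1-\theta} M_2^{\theta}$ for all such $g$ and all simple $f$ on $\mathcal{A}_1$ with finite-measure support and $\norm{f}_p \leq 1$.

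So fix such $f = \sum_j c_j \mathds{1}_{A_j}$ and $g = \sum_k d_k \mathds{1}_{B_k}$ with disjoint sets of finite measure, and write $c_j = \abs{c_j} e^{i \gamma_j}$, $d_k = \abs{d_k} e^{i \psi_k}$. For $z$ in the closed strip $\{ 0 \leq \Re z \leq 1 \}$ set $\alpha(z) = \tfrac{1-z}{p_1} + \tfrac{z}{p_2}$ and $\beta(z) = \tfrac{1-z}{p_1'} + \tfrac{z}{p_2'}$, so $\alpha(\theta) = 1/p$ and $\beta(\theta) = 1/p'$, and define
\[
f_z = \sum_j \abs{c_j}^{p\, \alpha(z)} e^{i \gamma_j} \mathds{1}_{A_j}, \qquad g_z = \sum_k \abs{d_k}^{p' \beta(z)} e^{i \psi_k} \mathds{1}_{B_k},
\]
dropping the terms with $c_j = 0$ or $d_k = 0$ and making the obvious modification on the components where $p_2 = \infty$. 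Then $F(z) = \int (T f_z)\, g_z \, \d \mu_2$ is a finite linear combination of the entire functions $z \mapsto \abs{c_j}^{p \alpha(z)} \abs{d_k}^{p' \beta(z)}$ with fixed coefficients $e^{i(\gamma_j + \psi_k)} \int_{B_k} T \mathds{1}_{A_j} \, \d \mu_2$; since $\Re \alpha(z)$ and $\Re \beta(z)$ stay in a compact interval, $F$ is analytic on the open strip and bounded and continuous on its closure.

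Next I would estimate $F$ on the two edges. Since $\Re \alpha(it) = 1/p_1$ one has $\norm{f_{it}}_{p_1}^{p_1} = \sum_j \abs{c_j}^p \mu_1(A_j) = \norm{f}_p^p \leq 1$, and likewise $\norm{g_{it}}_{p_1'} \leq 1$; H\"{o}lder's inequality (with exponents $p_1, p_1'$) together with $\norm{T}_{L^{p_1}(\mathcal{A}_1) \to L^{p_1}(\mathcal{A}_2)} \leq M_1$ gives $\abs{F(it)} \leq M_1$, and the analogous computation on $\Re z = 1$, using $\Re \alpha(1+it) = 1/p_2$, gives $\abs{F(1+it)} \leq M_2$. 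The three-lines theorem then yields $\abs{F(z)} \leq M_1^{1 - \Re z} M_2^{\Re z}$ throughout the strip. Evaluating at $z = \theta$ and observing that $f_\theta = f$, $g_\theta = g$ (because $p\,\alpha(\theta) = 1$ and $p'\beta(\theta) = 1$) gives $\abs{\int (Tf)\, g\, \d \mu_2} = \abs{F(\theta)} \leq M_1^{1-\theta} M_2^{\theta}$; taking suprema over $g$ and then over $f$, together with homogeneity and density, completes the proof. The main obstacle is the analytic step: one must verify carefully that $F$ is genuinely bounded on the closed strip so that the three-lines theorem applies, and handle the bookkeeping of the endpoint cases $p_2 = \infty$ (where $L^\infty$ is paired against $L^1$) and $p_1 = p_2$; the remaining duality and density reductions are routine.
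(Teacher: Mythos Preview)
Your sketch is correct and follows the classical complex-interpolation argument via Hadamard's three-lines theorem. The paper itself does not prove this statement but merely cites \cite[Theorem~1.3.4]{grafakos2014classical}, where exactly this argument is carried out, so your approach coincides with the referenced proof.
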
 
	
\end{appendix}

\section*{Acknowledgements}
Julian Hofstadler gratefully acknowledges support of the DFG within project 432680300 -- SFB 1456 subproject B02.
The authors thank Michael Gnewuch, David Krieg, Erich Novak, Art Owen, Mario Ullrich, Marcin Wnuk  and the anonymous referees for valuable comments that helped to improve the paper significantly.

\newcommand{\etalchar}[1]{$^{#1}$}

\noindent	
\textbf{Information about the authors:\\[-1.5ex]} 

\noindent
Julian Hofstadler, Universit\"at Passau, Faculty for Computer Science and Mathematics, Innstraße 33, 94032 Passau, Germany. 

\noindent
\textit{Email:} julian.hofstadler@uni-passau.de   

\medskip
\noindent
Daniel Rudolf, Universit\"at Passau, Faculty for Computer Science and Mathematics, Innstraße 33, 94032 Passau, Germany. 

\noindent
\textit{Email:} daniel.rudolf@uni-passau.de   


\begin{thebibliography}{MGNR12}
	
	
	
	\bibitem[AHT12]{aistleitner2012central}
	C.~Aistleitner, M.~Hofer, and R.~Tichy.
	\newblock A central limit theorem for Latin hypercube sampling with
	dependence and application to exotic basket option pricing.
	\newblock {\em International Journal of Theoretical and Applied Finance},
	15(07):1250046, 2012.
	
	
	\bibitem[BKJ{\etalchar{+}}20]{balandat2020botorch}
	M.~Balandat, B.~Karrer, D.~R.~Jiang, S.~Daulton, B.~Letham, A.~G.~Wilson, and E.~Bakshy.
	\newblock {B}o{T}orch: {B}ayesian optimization in {P}y{T}orch.
	\newblock In H.~Larochelle, M. Ranzato, R.~Hadsell, M.F.~Balcan and H.~Lin editors,  {\em Advances in {N}eural {I}nformation {P}rocessing {S}ystems 33}, pages 21524--21538.
	{C}urran {A}ssociates, {I}nc., 2020.
	
	\bibitem[BP11]{brass2011quadrature}
	H.~Brass and K.~Petras.
	\newblock {\em Quadrature theory: the theory of numerical integration on a compact interval}.
	\newblock American Mathematical Society, 2011.
	
	
	\bibitem[CP76]{cranley76randomization}
	R.~Cranley and T.~N.~L. Patterson.
	\newblock Randomization of number theoretic methods for multiple integration.
	\newblock {\em SIAM Journal on Numerical Analysis}, 13(6):904--914, 1976.
	
	\bibitem[DP10]{dick2010digital}
	J.~Dick and F.~Pillichshammer.
	\newblock {\em Digital nets and sequences: discrepancy theory and quasi–Monte Carlo integration}.
	\newblock Cambridge University Press, 2010.
	
	
	\bibitem[GH21]{gnewuch2021discrepancy}
	M.~Gnewuch and N.~Hebbinghaus.
	\newblock Discrepancy bounds for a class of negatively dependent random points including Latin hypercube samples.
	\newblock {\em The Annals of Applied Probability}, 31(4):1944--1965, 2021.
	
	\bibitem[GL22]{goda2022construction}
	T. Goda and P. L'Ecuyer.
	\newblock Construction-free median quasi-{M}onte {C}arlo rules for function
	spaces with unspecified smoothness and general weights.
	\newblock {\em SIAM Journal on Scientific Computing}, 44(4):A2765--A2788, 2022.
	
	\bibitem[Gra14]{grafakos2014classical}
	L.~Grafakos.
	\newblock {\em Classical {F}ourier analysis}, volume 249 of {\em Graduate Texts
		in Mathematics}.
	\newblock Springer, third edition, 2014.
	
	\bibitem[Hei94]{He94}
	S.~Heinrich.
	\newblock Random approximation in numerical analysis.
	\newblock {\em Proceedings of the conference ``Functional Analysis'' Essen},
	pages 123--171, Dekker, 1994.
	
	\bibitem[Kal21]{Kallenberg2021foundations}
	O.~Kallenberg.
	\newblock {\em Foundations of modern probability}, volume 99 of {\em Probability Theory and Stochastic Modelling}.
	\newblock Springer, third edition, 2021.
	
	\bibitem[KN17]{krieg2017universal}
	D.~Krieg and E.~Novak.
	\newblock A universal algorithm for multivariate integration.
	\newblock {\em Foundations of Computational Mathematics}, 17(4):895--916, 2017.
	
	\bibitem[KNR19]{kunsch2019solvable}
	R.~J. Kunsch, E. Novak, and D. Rudolf.
	\newblock Solvable integration problems and optimal sample size selection.
	\newblock {\em Journal of Complexity}, 53:40--67, 2019.
	
	\bibitem[KR19]{kunsch2019optimal}
	R.~J. Kunsch and D.~Rudolf.
	\newblock Optimal confidence for {M}onte {C}arlo integration of smooth
	functions.
	\newblock {\em Advances in Computational Mathematics}, 45(5):3095--3122, 2019.
	
	\bibitem[Loh96]{loh1996latin}
	W.-L. Loh.
	\newblock On Latin hypercube sampling.
	\newblock {\em The Annals of Statistics}, 24(5):2058--2080, 1996.
	
	\bibitem[MBC79]{McKay1979}
	M.~D. McKay, R.~J. Beckman, and W.~J. Conover.
	\newblock A comparison of three methods for selecting values of input variables
	in the analysis of output from a computer code.
	\newblock {\em Technometrics}, 21(2):239--245, 1979.
	
	\bibitem[MGNR12]{muller2012monte}
	T.~M{\"u}ller-Gronbach, E.~Novak, and K.~Ritter.
	\newblock {\em Monte {C}arlo-{A}lgorithmen}.
	\newblock Springer-Verlag, 2012.
	
	\bibitem[Nov88]{novak1988deterministic}
	E.~Novak.
	\newblock {\em Deterministic and stochastic error bounds in numerical
		analysis}, { Lecture Notes in Mathematics 1349.}
	\newblock Springer, 1988.
	
	\bibitem[NP09]{niemiro_pokarowski_2009}
	W.~Niemiro and P.~Pokarowski.
	\newblock Fixed precision {MCMC} estimation by median of products of averages.
	\newblock {\em Journal of Applied Probability}, 46(2):309–329, 2009.
	
	\bibitem[OR21]{owen2021strong}
	A.~B. Owen and D.~Rudolf.
	\newblock A strong law of large numbers for scrambled net integration.
	\newblock {\em SIAM Review}, 63(2):360--372, 2021.
	
	\bibitem[Owe95]{owen1995randomly}
	A.~B. Owen.
	\newblock Randomly permuted $(t, m, s)$-nets and $(t, s)$-sequences.
	\newblock In H. Niederreiter and P. J.-S. Shiue, editors, {\em Monte Carlo and quasi-Monte Carlo Methods in Scientific
		Computing}, pages 299--317. Springer, 1995.
	
	\bibitem[Owe13]{owen_2013_mcbook}
	A.~B. Owen.
	\newblock {\em Monte {C}arlo theory, methods and examples}.
	\newblock \url{https://artowen.su.domains/mc/}, 2013.
	
	\bibitem[PO21]{pan2021super}
	Z. Pan and A.~B. Owen.
	\newblock Super-polynomial accuracy of one dimensional randomized nets using
	the median-of-means.
	\newblock {\em Mathematics of Computation} 92:805--837, 2021.
	
	\bibitem[PO22]{pan2022super}
	Z. Pan and A.B. Owen.
	\newblock Super-polynomial accuracy of multidimensional randomized nets using the median-of-means.
	\newblock {\em arXiv preprint \href{https://arxiv.org/abs/2208.05078}{arXiv:2208.05078}}, 2022.
	
	
	\bibitem[PS08]{packham2008latin}
	N.~Packham and W.~Schmidt.
	\newblock {L}atin {h}ypercube {s}ampling with dependence and applications in
	finance.
	\newblock {\em Journal of Computational Finance}, 13(3):81--111, 2010.
	
	
	\bibitem[St87]{Stein}
	M.~Stein.
	\newblock Large sample properties of simulations using Latin hypercube sampling.
	\newblock {\em Technometrics}, 29(2):143--151, 1987.
	
	
	
	\bibitem[Ull16]{ullrich2016upper}
	M.~Ullrich.
	\newblock On ``Upper error bounds for quadrature formulas on
	function classes'' by {K. K. Frolov}.
	\newblock In R. Cools, D. Nuyens editors,  {\em Monte Carlo and quasi-Monte Carlo Methods}, pages 571--582.
	Springer, 2016.
	
	\bibitem[Ull17]{ullrich2017monte}
	M.~Ullrich.
	\newblock A Monte Carlo method for integration of multivariate smooth
	functions.
	\newblock {\em SIAM Journal on Numerical Analysis}, 55(3):1188--1200, 2017.
	
	\bibitem[UU16]{ullrich2016role}
	M.~Ullrich and T.~Ullrich.
	\newblock The role of Frolov's cubature formula for functions with bounded
	mixed derivative.
	\newblock {\em SIAM Journal on Numerical Analysis}, 54(2):969--993, 2016.
	
\end{thebibliography}
\end{document}